\numberwithin{table}{section}
\newtheorem{theorem}{Theorem}[section]
\newtheorem{lemma}[theorem]{Lemma}
\theoremstyle{definition}
\newtheorem{definition}[theorem]{Definition}
\newtheorem{cor}[theorem]{Corollary}
\theoremstyle{remark}
\newtheorem{remark}[theorem]{Remark}
\numberwithin{equation}{section}
\newfont{\tap}{tap scaled 650}
\def \[{[ }
\def \]{] }
\begin{document}

\title{Double pants decompositions revisited}

%\begin{center}
%{\bf Large \bf
%Erratum to ``Double pants decompositions''
%}
%\end{center}

\author{Anna Felikson}
\address{Department of Mathematical Sciences, Durham University, Science Laboratories, South Road, Durham, DH1 3LE, UK}
\email{anna.felikson@durham.ac.uk}

\author{Sergey Natanzon}
\address{
National Research University Higher School of Economics (HSE),
%\phantom{20 Myasnitskaya ulitsa,}
20 Myasnitskaya ulitsa, Moscow 101000, Russia; 
 \phantom{wwwwwwwwwwwwwwwwwwwwwwwwwwwwwwwwwwwwww}
Institute for Theoretical and Experimental Physics (ITEP),
%\phantom{ 25 B.Cheremushkinskaya,}
 25 B.Cheremushkinskaya, Moscow 117218, Russia;
\phantom{wwwwwwwwwwwwwwwwwwwwwwwwwwwwwwwwwwwww}
Laboratory of Quantum Topology, Chelyabinsk State University, Brat’ev Kashirinykh street 129, Chelyabinsk 454001, Russia.
%\noindent Higher School of Economics, Moscow  \phantom{wwwwwwwwwwwwwwwwwwwwwwwwwwww}
%\phantom{B}
%Belozersky Institute of Physico-Chemical Biology, Moscow State University,\phantom{wwwww}
%Institute Theoretical and Experimental Physics
}
\email{natanzons@mail.ru}

\thanks{Research of the second author is partially supported  by  RFBR grant 13-01-00755,  by the grant Nsh-5138.2014.1 for support of scintific schools and  by Laboratory of Quantum Topology of Chelyabinsk State University (Russian Federation government grant 14.Z50.31.0020).
 }

\begin{abstract}
Double pants decompositions were introduced in~\cite{FN} together with a flip-twist groupoid acting on these decompositions.
It was shown that flip-twist groupoid acts transitively on a certain topological class of the decompositions,
however, recently Randich discovered a serious mistake in the proof. 
In this note we present a new proof of the result, %main result of~\cite{FN}, 
accessible without reading  the initial paper.

\end{abstract}

\maketitle
%\setcounter{tocdepth}{1}
%\tableofcontents

\noindent
MSC classes: 57M50\\
Key words: Pants decomposition, Heegaard splitting, Curve complex. %Mapping class group.

\section{Introduction}
Double pants decompositions are introduced in~\cite{FN} as a union of two pants decompositions of the same surface. These decompositions are subject to certain transformations (called ``flips'' and ``handle twists''  generating a groupoid called ``flip-twist groupoid'', see Section~\ref{defs} for the definitions).  

The main result of~\cite{FN} is  stating that the flip-twist groupoid acts transitively on a certain set of double pants decompositions (called ``admissible double pants decompositions''). In the case of closed surfaces, these admissible pants decompositions can be characterised as ones corresponding to Heegaard splittings of a 3-sphere. In other words, the following theorem was proved in~\cite{FN}:

\bigskip

\noindent
{\bf Main Theorem.}
{\it
Let $S$ be a surface of genus $g$ with $n$ holes, where $2g+n>2$. Then the flip-twist groupoid acts transitively on the set of all admissible double pants decompositions of $S$. 
}

\bigskip

It was shown by Randich in his Master Thesis~\cite{R} that the original argument in~\cite{FN} contains a serious mistake. In this short note we present a new proof of the transitivity theorem, thus confirming that the main result of~\cite{FN} holds true.

%The paper is organised as follows.
%In Section~\ref{defs} we collect all the necessary definitions. 
%In Section~\ref{hole} we sketch the old proof and present the counterexample (due to   Randich~\cite{R}) showing that the old way of reasoning does not work. Finally, in section~\ref{proof} we present the new proof.

As the new proof is short and technically easy, we try to keep this note independent of~\cite{FN}: Section~2 %\ref{defs} 
contains all definitions necessary to formulate and prove the main theorem (Section~\ref{hole} is devoted to the mistake in the old proof and is not necessary to establish the result).

\medskip
\noindent
{\bf Acknowledgements.} We are grateful to Andrew Przeworski and Joseph Randich for careful reading of our paper, spotting the mistake in the original proof and communicating it to us.

\section{Definitions}
\label{defs}

%In this section we collect the definitions, first focusing on ordinary {\it pants decompositions} (Section~\ref{pants}) then moving to {\it double pants decompositions} (Section~\ref{double pants}), and finially to {\it admissible} double pants decompositions (Section~\ref{admissible}). 

\subsection{Pants decompositions}
%\label{pants}
Let $S=S_{g,n}$ be an oriented surface of genus $g$ with $n$ holes. 
By a {\it curve} on $S$ we will mean a simple closed essential curve considered up to a homotopy of $S$. Given two curves we always assume that there are no ``unnecessary intersections''  (i.e. the homotopy classes of the curves contain no representatives intersecting in the smaller number of points). We denote by $|a\cap b|$ the number of intersections of the curves $a$ and $b$.

\begin{definition}[Pants decomposition] A {\it pants decomposition} $P$ of $S$ is a collection of non-oriented mutually disjoint curves decomposing $P$ into pairs of pants (i.e., into spheres with 3 holes).

\end{definition} 

There are two important types of transformations acting on pants decompositions: %{\it flips} and {\it $\mathcal S$-moves}.

\begin{definition}[Flips] Let $P=\{c_1,\dots,c_k \}$ be a pants decomposition,
and suppose that $c_i\in P$ belongs to two different pairs of pants.
A {\it flip} of $P$ (in $c_i$) 
is a substitution of $c_i$ by any curve 
such that $|c_i'\cap c_j|=0$  for $j\ne i$  and  $c_i'\cap c_i=2$
%which does not intersect any of $c_j$ for $j\ne i$ and intersects $c_i$ in exactly 2 points 
(see Fig.~\ref{defs}.a). 

\end{definition}

%Notice, that for any curve $c_i\in P$ contained in two pairs of pants there exists a flip, but the flip is not unique.

\begin{definition}[$\mathcal S$-moves] Let $P=\{c_1,\dots,c_k \}$ be a pants decomposition and let $c_i\in P$ be a curve which belongs to a unique  pair of pants.
An {\it $\mathcal S$-move} in $c_i$ is a  substitution of $c_i$ by any curve $c_i'$ such that
$|c_i'\cap c_j|=0$  for $j\ne i$ and  $c_i'\cap c_i=1$
(see Fig.~\ref{defs}.b).

\end{definition} 

It is shown by Hatcher and Thurston~\cite{HT} that flips and $\mathcal S$-moves act transitively on all pants decompositions of a given surface.

\subsection{Double pants decompositions}
\label{double pants}

\begin{definition}[Double pants decomposition]
A {\it double pants decomposition} $(P^a,P^b)$ is a set of two pants decompositions $P^a$ and $P^b$ of the same surface.

\end{definition}

Clearly, flips act on double pants decompositions (we pick up a curve in $P^a$ or $P^b$ and perform the corresponding flip of an ordinary pants decomposition). 

To model $\mathcal S$-moves, \cite{FN} considers the  transformations called {\it handle twists}. To define them, we will use a notion of a {\it handle curve}:

\begin{definition}[Handle curve]
We will say that a curve $c$ on a surface $S$ is a {\it handle curve} if at least one of the connected components of the surface $S\setminus c$ is a torus with one hole (a ``handle''). 
All other curves will be called {\it non-handle curves}.

\end{definition}

\begin{definition}[Handle twists]
Let $(P^a,P^b)$ be a double pants decomposition and let $c\in P_a\cap P_b$ be a handle curve.
%such that a handle curve $c$ belongs both to $P^a$ and $P^b$. 
Let $H$ be the handle cut out  by $c$, and let $a_1\in P^a$ and $b_1\in P^b$ be the curves contained in $H$. % and intersecting at a unique point. 
 A {\it handle twist} of $a_1$ along $b_1$ is a Dehn twist  along $b_1$ applied to $a_1$, %(in any of two directions), 
see Fig.~\ref{defs}.c.

\end{definition}

\begin{definition}[$FT$-groupoid]
By a {\it flip-twist groupoid} (or $FT$-groupoid) we mean the groupoid acting on double pants decompositions and generated by all flips and handle twists. 

\end{definition}

\begin{definition}[$FT$-equivalent double pants decompositions]
Two double pants decompositions are called {\it $FT$-equivalent} if there is a sequence of flips and handle twists transforming one of these decompositions to another.

\end{definition}

\begin{figure}[!h]
\begin{center}
\psfrag{a}{\small (a)}
\psfrag{b}{\small (b)}
\psfrag{c}{\small (c)}
\epsfig{file=./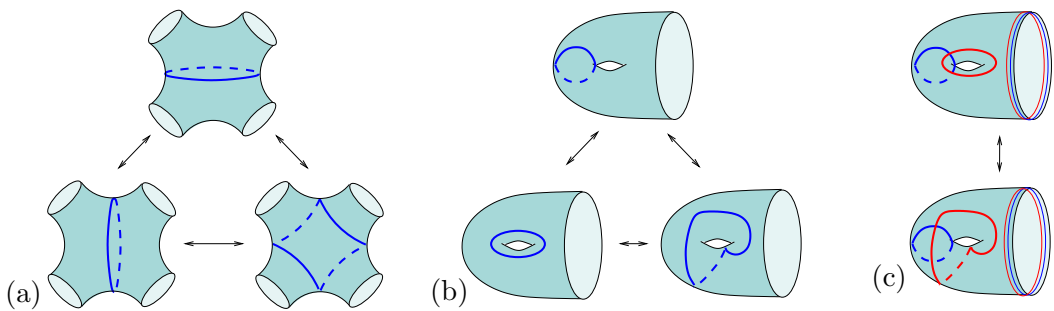,width=0.9\linewidth}
\caption{Examples of (a) flips; (b) $\mathcal S$-moves; (c) handle twist. }
\label{defs}
\end{center}
\end{figure}

\subsection{Admissible double pants decompositions}
\label{admissible}

\begin{definition}[Standard double pants decompositions]
A double pants decomposition $(P^a,P^b)$ of a genus $g$ surface $S=S_{g,n}$ is {\it standard} if there is a set of handle curves $\{c_i\}$ in $P^a\cap P^b$ such that $S'=S\setminus  \{c_i\}$ is a union of $g$ handles $H_1,\dots,H_g$ and at most one sphere with holes, moreover, %for each handle $H_i\in S'$ the curve of $P^a$ contained in $H_i$ intersects the curve of $P^b$ contained in $H_i$ at a unique point.
for $a_j,b_j\in H_j$, $a_j\in P^a$, $b_j\in P^b$ we require  $|a_j\cap b_j|=1$.
%$a_i$ intersects $b_i$ at   a unique point.

% $g$ handle curves $c_1,\dots,c_g\in P^a\cap P^b$, and for each handle $H_i$, the curves $a_i\in P^a\cap H$ and $b_i\in P^b$ . 
%(see Fig.~\cite{standard} for an example).

\end{definition}

\begin{definition}[Admissible double pants decompositions]
A double pants decomposition $(P_1^a,P_1^b)$ is {\it admissible} if it 
may be obtained from a standard double pants decomposition by a sequence of flips.
%
%is flip-equivalent to a standard double pants decomposition, i.e. if
%there exists a standard double pants decomposition $(P_2^a,P_2^b)$ and a sequence of flips transforming $(P_1^a,P_1^b)$ to $(P_2^a,P_2^b)$.

\end{definition}

\begin{remark}[Admissible decompositions as Heegaard splittings of $S^3$]
It is easy to show that in case of closed surfaces admissible double pants decompositions correspond to Heegaard splittings of 3-sphere, see~\cite[Theorem~2.15]{FN} . 
%
%
%In case of a closed surface any double pants decomposition defines a Heegaard splitting of a $3$-manifold (as each pants decomposition defines a handlebody).
%It is easy to see that a standard double pants decomposition corresponds to a Heegaard splitting of a 3-sphere. Hence, admissible double pants decomposition corresponds to a Heegaard splitting of a 3-sphere (as flips do not change handlebodies). It is also easy to prove that if a double pants decomposition corresponds to a Heegaard splitting of a 3-sphere then this decomposition in admissible.

\end{remark}

The main goal of~\cite{FN} and of the current note is to prove that any two admissible double pants decompositions are $FT$-equivalent.

\section{The issue with  the old proof}
\label{hole}

The proof in~\cite{FN} was based on the notions of {\it zipper system} and {\it zipped flips} (see~\cite[Definitions~1.3, 1.4, 1.7]{FN}). The idea was 1) to show  that every admissible double pants decomposition is compatible with some zipper system; 2)  to prove that all the decompositions compatible with the same zipper system are $FT$-equivalent;
3) to check that for all necessary changes of zipper systems one can use flips and handle twists.
In particular, the first of these steps was based on \cite[Lemma 1.12]{FN} which (wrongly) shows that every flip is a zipped flip.
 
In~\cite{R} Randich shows that  the result of \cite[Lemma 1.12]{FN} is wrong: not every pants decomposition is compatible with a zipper system, and, consequently, not every flip is a zipped flip. To demonstrate this, Randich notices that 
if $P$ is a pants decomposition compatible with a zipper system then the dual graph to $P$ is planar, however, as Randich observes, this property is not always preserved by flips (see Fig.~\ref{ex}).

\begin{figure}[!h]
\begin{center}
\psfrag{c}{ }
\psfrag{c'}{ }
\psfrag{a}{\small (a)}
\psfrag{b}{\small (b)}
\epsfig{file=./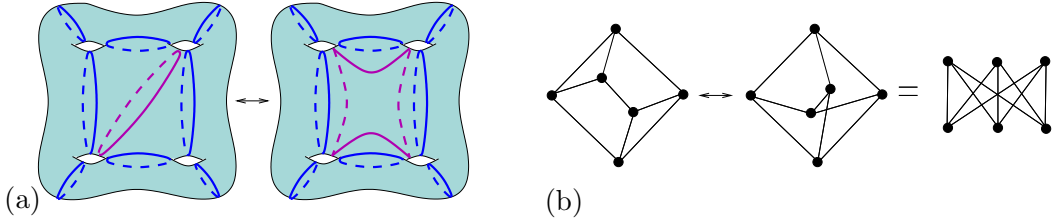,width=0.9\linewidth}
\caption{(a) An example of an unzipped flip; (b) The corresponding transformation of the dual graph results in a non-planar graph }
\label{ex}
\end{center}
\end{figure}

\begin{remark}
In~\cite[Section 3]{FN}, the case of genus 2 was proved without usage of zipper systems, so, is not affected by the detected mistake. This gave the idea for construction of the new proof.

\end{remark}

\section{New proof of the Main Theorem}
\label{proof}

The proof is by induction on the genus of the surface (and on the number of holes for surfaces of the same genus).

%In~\cite{H}, A.~Hatcher studied flips and $S$-moves and showed in particular that  these transformations act transitively on pants decompostions of closed surfaces. Restricting ourself to a sphere with holes, we get a surface where no $S$-moves are possible, so we conclude that flips act transitively  on pants decompositions of a sphere with holes. This implies the following lemma.

\begin{lemma}
\label{genus0}
Flips act transitively on double pants decompositions of $S_{0,n}$. %a sphere with holes.
%Let $S$ be a sphere with holes. Then flips act transitively  on double pants decompositions of $S$.
%and all double pants decompositions of $S$ are admissible.

\end{lemma}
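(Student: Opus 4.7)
The plan is to reduce the statement to the classical Hatcher--Thurston theorem mentioned in Section~\ref{defs}. The key observation is that on a genus-0 surface $S_{0,n}$, there are no handle curves at all: every essential simple closed curve on $S_{0,n}$ is separating, and each of its two complementary components is a sphere with holes (of positive genus only if the original surface has positive genus). In particular, cutting along any curve $c$ of a pants decomposition cannot produce a one-holed torus on either side, so $c$ always belongs to two distinct pairs of pants. Consequently no $\mathcal{S}$-move is ever applicable in a pants decomposition of $S_{0,n}$.

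With that observation in hand, I would first deduce that flips alone act transitively on single pants decompositions of $S_{0,n}$. Indeed, Hatcher--Thurston~\cite{HT} give transitivity for flips together with $\mathcal{S}$-moves, and since no $\mathcal{S}$-move arises on $S_{0,n}$, every move used in any such sequence is a flip. (Concretely, the dual graph of a pants decomposition of $S_{0,n}$ is a trivalent tree with $n$ leaves, and flips realize the rotations between such trees; transitivity then amounts to the well-known connectivity of the rotation graph on trivalent trees with a fixed leaf labeling.)

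Finally, I would promote this to double pants decompositions: given two double pants decompositions $(P^a,P^b)$ and $(Q^a,Q^b)$ of $S_{0,n}$, apply a sequence of flips inside $P^a$ (leaving $P^b$ untouched) to bring $P^a$ to $Q^a$ using the single-decomposition case, and then apply a sequence of flips inside $P^b$ to bring $P^b$ to $Q^b$. Since a flip on one component of a double pants decomposition does not affect the other component, the two sequences can be concatenated. This yields an $FT$-equivalence that uses only flips.

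The main conceptual point (which is really the only thing to verify) is the absence of $\mathcal{S}$-moves on $S_{0,n}$; once this is stated, the lemma follows immediately from the classical Hatcher--Thurston result and from the independence of flips on the two halves of a double pants decomposition. There is no genuine obstacle — the lemma is the base case that makes the later induction on genus possible.
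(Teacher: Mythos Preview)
Your proof is correct and follows essentially the same approach as the paper: observe that no $\mathcal{S}$-moves are possible on $S_{0,n}$ (since there are no handle curves on a genus-$0$ surface), deduce from Hatcher--Thurston that flips alone act transitively on ordinary pants decompositions of $S_{0,n}$, and then extend to double pants decompositions by flipping each half independently. The paper's proof is simply a one-sentence version of exactly this argument.
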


\begin{proof}
As flips and $\mathcal S$-moves act transitively on (ordinary) pants decompositions~\cite{HT}, 
and since no $\mathcal S$-moves are possible on the sphere, we conclude that flips act transitively on 
(ordinary) pants decompositions of $S_{0,n}$, as well as on double pants decompositions of $S_{0,n}$.

\end{proof}

Lemma~\ref{genus0} settles the base of the induction, genus 0. From now on we consider a surface $S=S_{g,n}$ in assumption that
 $g>0$ and that the main theorem holds for all surfaces $S'=S_{g',n'}$ satisfying $g'<g$ or $g'=g$, $n'<n$.

\begin{lemma}
\label{same handle}
Let $q\subset S$ be a handle curve and suppose that $q\in P^a_1, P^b_1, P^a_2,P^b_2$, where 
$(P^a_1, P^b_1)$ and  $(P^a_2,P^b_2)$ are two admissible double pants decompositions of $S$.
Then $(P^a_1, P^b_1)$ is $FT$-equivalent to  $(P^a_2,P^b_2)$.

\end{lemma}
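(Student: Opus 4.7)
The plan is to induct on the genus by cutting $S$ along $q$ into a one-holed torus $H$ and a complementary subsurface $S'=\overline{S\setminus H}$ of genus $g-1$ with $n+1$ holes, so that the inductive hypothesis applies to $S'$. Each decomposition $P_i^a$ contains a unique curve $a_i$ inside $H$, and the remaining curves, together with $q$ viewed as a new boundary of $S'$, form a pants decomposition $Q_i^a$ of $S'$; likewise for $b_i$ and $Q_i^b$. The aim is to first equate the restrictions $(Q_1^a,Q_1^b)$ and $(Q_2^a,Q_2^b)$ on $S'$, and then to equate $(a_1,b_1)$ with $(a_2,b_2)$ on $H$.

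The first step is to verify that $(Q_i^a,Q_i^b)$ is an admissible double pants decomposition of $S'$. A standard decomposition of $S$ whose set of handle curves $\{c_j\}$ contains $q$ restricts to a standard decomposition of $S'$, so the task is to argue that the flip sequence witnessing admissibility of $(P_i^a,P_i^b)$ can be taken to begin from such a $q$-compatible standard decomposition and to restrict to a legitimate flip sequence on $S'$. This is the main technical obstacle, since a generic flip sequence on $S$ may shuffle curves across $q$; I expect the cleanest resolution to go through the Heegaard splitting characterization mentioned in the remark after the definition of admissibility, under which $q$ bounds disks in both handlebodies, these disks glue to a sphere cutting $S^3$ into two balls, and the restricted double pants decompositions inherit Heegaard splittings of those balls.

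Once admissibility on $S'$ is established, the inductive hypothesis yields a sequence of flips and handle twists on $S'$ relating $(Q_1^a,Q_1^b)$ to $(Q_2^a,Q_2^b)$. Each such move lifts verbatim to an $FT$-move on $S$ that leaves $H$, the curve $q$, and the curves $a_i,b_i$ untouched. Applying this lifted sequence to $(P_1^a,P_1^b)$ reduces the problem to the case in which both decompositions agree on $S'$ and differ only in the pair of curves inside $H$.

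Finally, to equate $(a_1,b_1)$ with $(a_2,b_2)$ on $H$ we use the two handle twists available at the handle curve $q$, which realize Dehn twists along $a$ and along $b$ respectively. Admissibility forces $|a_i\cap b_i|=1$ on $H$, and the mapping class group of the one-holed torus is generated by the Dehn twists along any such pair of curves; a suitable word in handle twists therefore carries $(a_1,b_1)$ to $(a_2,b_2)$, completing the argument.
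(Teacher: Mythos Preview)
Your strategy---cut along $q$, apply the inductive hypothesis to the complement $S'$, and then adjust the curves inside the handle $H$ via handle twists---is exactly the paper's approach, although the paper compresses it into three lines and does not separate $H$ from $S'$. Making the $H$-step explicit is actually an improvement: the paper invokes the inductive hypothesis on all of $S\setminus q$, but on the component $H\cong S_{1,1}$ there are no intrinsic $FT$-moves at all, so one really does need the handle twists of $S$ along $q$, exactly as you say.

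Two steps in your write-up remain unjustified, however. First, the Heegaard-splitting characterisation of admissibility that you propose to use is stated in the paper only for \emph{closed} surfaces, while the induction immediately produces surfaces with holes; you would need either to extend that characterisation or to give a direct argument that the restriction $(Q_i^a,Q_i^b)$ on $S'$ is admissible. Second, the assertion that ``admissibility forces $|a_i\cap b_i|=1$ on $H$'' is not obvious: the flip sequence connecting $(P_i^a,P_i^b)$ to a standard decomposition need not fix $q$, so it cannot simply be restricted to $H$. Since handle twists preserve the intersection number $|a\cap b|$, this point is essential for your final paragraph to go through. The paper's own proof glosses over both issues; you have correctly flagged them as nontrivial, but your proposed resolutions are incomplete as written.
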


\begin{proof}
The curve $q$ either cuts $S$ into two smaller surfaces or defines the surface $S\setminus q$ of genus smaller than $g$. In both cases we apply the inductive assumption implying that all admissible double pants decompositions of $S\setminus q$ are $FT$-equivalent.
Performing the  same sequence of transformations on $S$, we see the $FT$-equivalence of all admissible double pants decompositions containing the same handle curve.

\end{proof}

\begin{lemma}
\label{disjoint handles}
Let $q_1$ and $q_2$ be two handle curves in $S$, $q_1 \cap q_2=\emptyset$. 
Let $(P^a_1, P^b_1)$ and  $(P^a_2,P^b_2)$ be admissible double pants decompositions of $S$ such that  $q_1\in P^a_1\cap P^b_1$, $q_2\in  P^a_2\cap P^b_2$. Then  $(P^a_1, P^b_1)$ is $FT$-equivalent to  $(P^a_2,P^b_2)$.

\end{lemma}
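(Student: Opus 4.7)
The plan is to assume $q_1\ne q_2$ (otherwise Lemma~\ref{same handle} applies directly) and to construct an auxiliary admissible double pants decomposition $(P^a_3,P^b_3)$ with $q_1,q_2\in P^a_3\cap P^b_3$. Applying Lemma~\ref{same handle} twice --- once via $q_1$ to connect $(P^a_1,P^b_1)$ to $(P^a_3,P^b_3)$, and once via $q_2$ to connect $(P^a_3,P^b_3)$ to $(P^a_2,P^b_2)$ --- then yields the claim by transitivity of $FT$-equivalence.

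The first task is the topological claim that the handles $H_1$ and $H_2$ cut off by $q_1$ and $q_2$ are disjoint. Since $q_1\cap q_2=\emptyset$, the curve $q_2$ lies entirely on one side of $q_1$; if $q_2\subset H_1$ then $q_2$ is an essential simple closed curve in a torus with one hole, hence either boundary-parallel in $H_1$ (forcing $q_2=q_1$, which is excluded) or non-separating in $H_1$ and therefore in $S$ (contradicting that $q_2$ is a handle curve, which must be separating). So $q_2\subset S\setminus H_1$, and by the symmetric argument $q_1\subset S\setminus H_2$, giving $H_1\cap H_2=\emptyset$.

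For the construction, the complement $S_0=S\setminus(H_1\cup H_2)$ has genus $g-2$ and $n+2$ boundary components. Choose $g-2$ pairwise disjoint simple closed curves in $S_0$ each cutting off a handle $H_3,\dots,H_g$, leaving a sphere with $g+n$ holes. Pants-decompose this sphere piece once and use this common decomposition in both $P^a_3$ and $P^b_3$; on each handle $H_i$, pick curves $a_i\in P^a_3$ and $b_i\in P^b_3$ meeting once. By construction $(P^a_3,P^b_3)$ is standard (hence admissible by definition) and satisfies $q_1,q_2\in P^a_3\cap P^b_3$, as required.

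The main obstacle lies in the first step: the topological exclusion of $q_2\subset H_1$ rests on the classification of essential simple closed curves in a torus with one hole. A secondary concern is the degenerate case $g+n<3$ in which Step~2 leaves no room for a pants-decomposable sphere piece; however this only occurs for $S_{g,0}$ with $g\le 2$, and by the very same classification no two distinct disjoint handle curves exist in such surfaces, so the lemma is vacuous in those cases.
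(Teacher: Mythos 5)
Your proof is correct and takes essentially the same route as the paper: both pass through an intermediate standard double pants decomposition whose handle system contains both $q_1$ and $q_2$, and both close the gaps using Lemma~\ref{same handle}. The only difference is presentational --- the paper produces the intermediate decomposition by invoking the inductive hypothesis on $S\setminus q_1$, whereas you build it directly from an explicit cut system and spell out the underlying topological facts (disjointness of the two handles, and vacuity of the degenerate small-genus cases) that the paper leaves implicit.
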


\begin{proof}
Cutting $S$ along $q_1$ we obtain a handle and a surface $S'$ of genus $g'<g$, hence by inductive assumption $FT$-groupoid acts transitively on the double pants decompositions of $S'$. In particular,   $(P^a_1, P^b_1)$ is $FT$-equivalent to a standard double pants decomposition  $(P^a_3,P^b_3)$ containing the curve $q_2$ (with $q_2\in P^a_3\cap P^b_3$). In view of Lemma~\ref{same handle} (applied for $q=q_2$) this implies that   $(P^a_1, P^b_1)$ is $FT$-equivalent to $(P^a_2,P^b_2)$.

\end{proof}

Lemma~\ref{same handle} together with Lemma~\ref{disjoint handles} motivate the following definition.

\begin{definition}[$FT$-equivalent handle curves]
Let $q_1$ and $q_2$ be handle curves on $S$. We say that $q_1$ is {\it $FT$-equivalent} to $q_2$ if there exist  double pants decompositions  $(P^a_1, P^b_1)$ and  $(P^a_2,P^b_2)$ such that  $q_1\in P^a_1\cap P^b_1$, $q_2\in  P^a_2\cap P^b_2$, and  $(P^a_1, P^b_1)$ is $FT$-equivalent to $(P^a_2,P^b_2)$.

\end{definition}

In particular, Lemma~\ref{disjoint handles} implies the following corollary.% (shown also in Fig.~\ref{caterpillar}.b).

\begin{cor}
\label{cor disjoint handles}
Any two disjoint handle curves in the same surface are $FT$-equivalent.
%If $q_1,q_2\in S$ are handle curves, $q_1\cap q_2=\emptyset$, then $q_1$ is $FT$-equivalent to $q_2$. 

\end{cor}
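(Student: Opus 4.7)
The plan is to reduce the corollary to a routine existence statement and then invoke Lemma~\ref{disjoint handles}. By the definition of $FT$-equivalent handle curves, I need to exhibit admissible double pants decompositions $(P^a_1,P^b_1)$ and $(P^a_2,P^b_2)$ containing $q_1$ and $q_2$, respectively, in the common part $P^a\cap P^b$; the $FT$-equivalence itself will then be delivered by Lemma~\ref{disjoint handles}, which is tailor-made for this situation.

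First I would establish the auxiliary claim that every handle curve $q$ on $S=S_{g,n}$ (with $g\ge 1$) belongs to $P^a\cap P^b$ of some standard, and hence admissible, double pants decomposition. Cutting $S$ along $q$ produces a handle $H_1$ together with a surface $S'\cong S_{g-1,n+1}$. Inside $H_1$ pick curves $a_1\in P^a$ and $b_1\in P^b$ with $|a_1\cap b_1|=1$, which is possible since $H_1\cong S_{1,1}$. On $S'$, iteratively peel off $g-1$ more handles using pairwise disjoint handle curves $c_2,\ldots,c_g$; this is the classical fact that any orientable surface can be written as a union of handles glued to a sphere with holes. Equip each resulting handle $H_j$ with curves $a_j\in P^a$ and $b_j\in P^b$ with $|a_j\cap b_j|=1$, and on the remaining sphere with holes fix a single pants decomposition $P_0$ to be used simultaneously for $P^a$ and $P^b$. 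The collection thus produced is a standard double pants decomposition, and by definition it is admissible and contains $q$ in $P^a\cap P^b$.

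With the auxiliary claim in hand, the corollary follows in one line. Given disjoint handle curves $q_1$ and $q_2$ on $S$, apply the construction twice to obtain admissible double pants decompositions $(P^a_1,P^b_1)$ and $(P^a_2,P^b_2)$ with $q_1\in P^a_1\cap P^b_1$ and $q_2\in P^a_2\cap P^b_2$. The hypotheses of Lemma~\ref{disjoint handles} are then exactly met, and its conclusion says that $(P^a_1,P^b_1)$ and $(P^a_2,P^b_2)$ are $FT$-equivalent. By the definition of $FT$-equivalent handle curves, this means $q_1$ and $q_2$ are $FT$-equivalent.

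No real obstacle is anticipated: the corollary is essentially a rewording of Lemma~\ref{disjoint handles}, and the only nontrivial ingredient is the existence of a standard double pants decomposition containing a prescribed handle curve, which reduces to the familiar handle-body decomposition of surfaces. The one point I would be careful about is to record that the two peeling procedures carried out for $q_1$ and $q_2$ are performed independently and need not produce compatible decompositions of $S$; compatibility is not required, since Lemma~\ref{disjoint handles} only needs the two decompositions to be admissible and to contain the respective handle curves.
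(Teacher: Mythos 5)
Your argument is correct and takes the same route as the paper, which deduces Corollary~\ref{cor disjoint handles} immediately from Lemma~\ref{disjoint handles}. The extra step you supply—explicitly building a standard (hence admissible) double pants decomposition through a given handle curve by peeling off handles—is a detail the paper leaves implicit, but the underlying reasoning is identical.
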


\begin{lemma}
\label{handle for a curve}
If $S$ is a surface of positive genus, then
for every non-handle curve $c\subset S$ there exists a handle curve $q\subset S$ such that $c\cap q=\emptyset$. %, see Fig.~\ref{caterpillar}.c).

\end{lemma}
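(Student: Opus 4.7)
The plan is to split on whether $c$ is separating and to construct $q$ in each case as the boundary of a regular neighbourhood of two curves meeting once (or, in the degenerate genus~$1$ subcase, as a curve that co-bounds a pair of pants with the two copies of $c$). The guiding topological fact is that $\p N(\alpha\cup\beta)$ is a handle curve of $S$ whenever $\alpha,\beta$ are simple closed curves with $|\alpha\cap\beta|=1$, since a regular neighbourhood of such a pair is always a torus with one hole.

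If $c$ is separating, then $S\setminus c$ has two components $S_1, S_2$, with genera summing to $g$ and boundary components summing to $n+2$. Since $c$ is non-handle, neither $S_i$ equals $S_{1,1}$, and since $g\geq 1$ one of them, say $S_1$, has positive genus. In the interior of $S_1$ I would draw two simple closed curves $\alpha,\beta$ with $|\alpha\cap\beta|=1$ in a handle of $S_1$, pushed off from $\partial S_1$; this is possible because $S_1\neq S_{1,1}$ and so has either a second handle or at least two boundary components giving room to separate $\alpha\cup\beta$ from $\partial S_1$. Then $q:=\p N(\alpha\cup\beta)$ lies in the interior of $S_1$ and hence is disjoint from $c$.

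If $c$ is non-separating, then $S':=S\setminus c$ is connected of genus $g-1$ with $n+2$ boundary components, two of which, $c_+$ and $c_-$, correspond to the two sides of $c$. When $g\geq 2$ the preceding construction carried out inside $S'$ (which has positive genus) again yields $q$. When $g=1$ the surface $S'$ is a sphere with $n+2\geq 3$ holes; in that case I would pick $q\subset S'$ to be a simple closed curve such that $q,c_+,c_-$ co-bound a pair of pants. Regluing $c_+$ to $c_-$ converts that pair of pants into a torus with one hole whose boundary is $q$, so $q$ is a handle curve of $S$, and it is disjoint from $c$ by construction.

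The main obstacle will be to ensure that $q$ is \emph{essential} rather than boundary-parallel in the low-complexity subcases: in Case~2 with $g=1$ the side of $q$ in $S'$ opposite to $c_+,c_-$ must contain at least two further boundary components, and this is precisely where the complexity hypothesis $2g+n>2$ of the main theorem is used. Beyond this book-keeping the verifications are elementary surface topology.
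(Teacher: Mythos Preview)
Your separating case is exactly the paper's argument, only spelled out more explicitly: the paper just says that the positive-genus side ``contains a handle curve'' and leaves the construction implicit.

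In the non-separating case your route diverges from the paper's. The paper treats all genera at once: it picks a curve $a$ with $|a\cap c|=1$ and sets $q=\partial N(c\cup a)$, so that the handle bounded by $q$ \emph{contains} $c$. You instead cut along $c$ and, for $g\ge 2$, look for a handle curve in $S\setminus c$ (so your handle lies \emph{away} from $c$); only for $g=1$ do you build a $q$ that co-bounds a pair of pants with $c_+,c_-$. That last construction is in fact the paper's construction seen from the other side: the pair of pants with boundary $q,c_+,c_-$ is exactly $N(c\cup a)\setminus c$. So your genus-one subcase already works verbatim for every genus, and adopting it throughout would eliminate your case split and recover the paper's proof.

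One small correction to your closing remark: the hypothesis $2g+n>2$ does \emph{not} rule out the bad case. For $g=1$ it only gives $n\ge 1$, whereas your essentiality check (the side of $q$ away from $c_+,c_-$ must carry at least two more boundary components) needs $n\ge 2$. In fact the lemma as stated is simply false for $S_{1,1}$: every essential curve there is a non-handle curve, and there are no handle curves at all. The paper's proof has the same gap; neither argument covers $S_{1,1}$, but the lemma is never invoked for that surface in the main induction.
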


\begin{proof}
If $c$ is a separating curve (i.e. $S\setminus c$ is not connected), then at least one of the connected components of  $S\setminus c$ is of positive genus, so, contains a handle curve $q$ disjoint from $c$.

Now, suppose that $c$ is not separating. Then there exists a curve $a$ intersecting $c$ at a unique point. Consider a neighbourhood of $c\cup a$:  its boundary is a simple closed curve (denote it $q$, see Fig.~\ref{nbhood}.a). Moreover, it is easy to check that $q$ is a handle curve, which is clearly disjoint from $c$.

\end{proof}

\medskip
\noindent
{\bf Plan of proof of the theorem:}

\smallskip

\noindent
{\bf 1. (Reduce to standard).} As admissible double pants decompositions are the ones flip-equivalent to the standard ones, it is sufficient to show that any two standard double pants decompositions of the same surface are $FT$-equivalent.

\medskip
\noindent
{\bf 2. (Reduce to one handle).} In view of Lemma~\ref{same handle}, any two standard double pants decompositions containing the same handle curve are $FT$-equivalent. So, to prove that any two standard double pants decompositions are $FT$-equivalent, it is sufficient to prove that any two handle curves $c_{start}$ and $c_{end}$ are $FT$-equivalent. 

\medskip
\noindent
{\bf 3. (On the curve complex, take a path from  $c_{start}$ to $c_{end}$).} Consider the curve complex $\mathcal C(S)$ of $S$ (i.e. the complex whose vertices correspond to homotopy classes of  simple closed curves on $S$ and whose simplices are spanned by vertices corresponding to disjoint sets of curves; in particular, the edges correspond to disjoint pairs of curves).
In view of~\cite{HT} $\mathcal C(S)$ is connected, so,
there exists a sequence $\sigma$ of curves $\{c_{start}=c_0, c_1, c_2, \dots, c_m=c_{end}\}$ such that 
$c_i\cap c_{i+1}=\emptyset$ for $i=1,\dots,m-1$.
%In other words, we consider  a path in the curve complex connecting $c_{start}$ to $c_{end}$. 
%We get a caterpillar as in Fig~\ref{caterpillar} (sitting inside the curve complex).

\medskip
\noindent
{\bf 4. (Decompose the path into handle-free subpathes).}
Decompose the sequence $\sigma$ into finitely many subsequences $\sigma_1,\dots,\sigma_t$ such that the endpoints of each subsequence are handle curves and all other curves in $\sigma$ are non-handle curves.
% (for example, if $\sigma=c_1,\dots,c_{11}$ and all curves in $\sigma$ except for $c_2,c_4,c_5, c_9,c_{10}$ are handle curves then $\sigma=\sigma_1, \sigma_$ where $\sigma_1=\{c_1,c_2,c_3 \}$, $\sigma_2=\{c_3,c_4,c_5 \}$, $\sigma_3=c_6$, $\sigma_4=c_7$, $\sigma_5=\{c_8,c_9,c_{10},c_{11}\}$).
%
It is sufficient to prove that the endpoints of one subsequence are $FT$-equivalent:
Corollary~\ref{cor disjoint handles} takes care of transferring from one subsequence to the adjacent one.

%$\sigma_1,\dots,\sigma_t$ such that 
%\begin{itemize}
%\item $\sigma_i={c_{i,start}=c_{i,1},c_{i,2},\dots,c_{i,m_i}=c_{i,end}\}$; 
%\item $c_{i,end}=c_{i+1,start}$ for all $i=1,\dots m-1$;
%\item the curves  $c_{i,end}=c_{i+1,start}$ are handle curves, and no other curve in the sequence $\sigma$ is a handle curve.
%
%\end{itemize} 

\medskip
\noindent
{\bf 5. (Choose a disjoint handle for each curve in the subpath). }
Given a subsequence $\sigma_i=\{c_{i,1},\dots ,c_{i,m_i}\}$ (where $c_{i,1}$ and $c_{i,m_i}$ are handle curves while all other curves in $\sigma_i$ are not), for each non-handle curve $c_{i,j}$ ($1<j<m_i$) consider a handle curve $q_{i,j}$ which does not intersect $c_{i,j}$ (it does exist in view of Lemma~\ref{handle for a curve}).
We get a caterpillar as in Fig~\ref{caterpillar} (sitting inside the curve complex).

\medskip
\noindent
{\bf 6. (Move from one leg of the caterpillar to the adjacent one).}
It is left to prove that the handle curve $q_{i,j}$ is $FT$-equivalent to the handle curve $q_{i,j+1}$ (for any $1 \le j<m_i$). This is done in Lemmas~\ref{two handles to the same curve} and~\ref{steps}.

\begin{figure}[!h]
\begin{center}
\psfrag{c1}{$c_1$}
\psfrag{c2}{$c_2$}
\psfrag{q1}{$q_1$}
\psfrag{q2}{$q_2$}
\psfrag{c_2}{$c_2$}
\psfrag{c1}{$c_1$}
\psfrag{c}{$c$}
\psfrag{N}{$N$}
\psfrag{q}{$q$}
\psfrag{s}{$\sigma$}
\psfrag{s1}{$\sigma_1$}
\psfrag{s2}{$\sigma_2$}
\psfrag{s3}{$\sigma_3$}
\psfrag{s4}{$\sigma_4$}
\psfrag{a}{\small (a) }
\psfrag{b}{\small (b) Cor.~\ref{cor disjoint handles}}
\psfrag{c_}{\small (c) L.~\ref{handle for a curve}}
\psfrag{d}{\small (d) L.~\ref{two handles to the same curve}}
\psfrag{e}{\small (e) L.~\ref{steps}}
\epsfig{file=./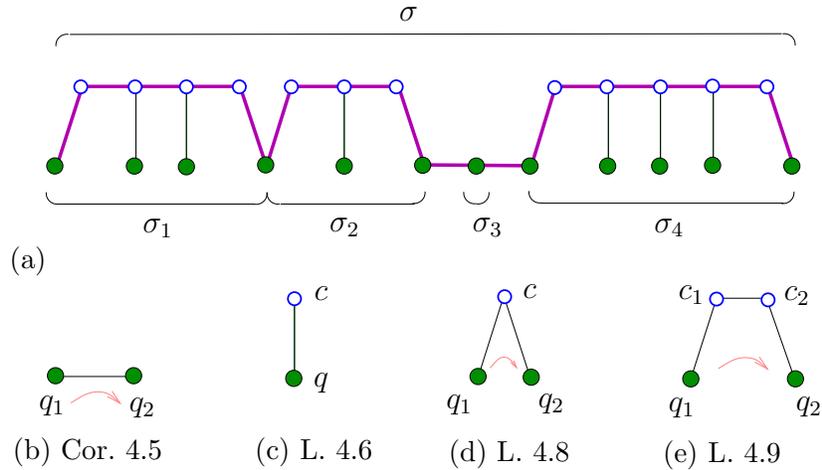,width=0.7\linewidth}
\caption{ (a) Idea of proof: caterpillar; (b)-(e) List of lemmas. \qquad
Filled/empty nodes denote handle/non-handle curves respectively. }
\label{caterpillar}
\end{center}
\end{figure}

\begin{remark}
The idea of a caterpillar-type proof is inspired by~\cite{FZ}, where a caterpillar was used to prove Laurent phenomenon in cluster algebras. 
\end{remark}

\begin{lemma}
\label{two handles to the same curve}
Let $c\subset S$ be a non-handle curve, let $q_1,q_2\subset S$ be two handle curves satisfying $c\cap q_1=c\cap q_2=\emptyset$. Then $q_1$ is $FT$-equivalent to $q_2$.

\end{lemma}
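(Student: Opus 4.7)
The plan is to reduce to the inductive Main Theorem applied to the surface $S\setminus c$, which has strictly smaller complexity than $S$: if $c$ is non-separating then $S\setminus c$ has genus $g-1$, and if $c$ is separating then $S\setminus c$ is a disjoint union of two components, each of strictly smaller complexity (in particular, neither component is a one-holed torus, because $c$ is non-handle). In either case the inductive hypothesis is available.

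The first step is to put each $q_i$ in \emph{good position} with respect to $c$, meaning that the handle $H_i$ bounded by $q_i$ does not contain $c$; equivalently, $q_i$ is still a handle curve on $S\setminus c$. If $c\subset H_i$ I would pick any handle curve $q_i'\subset S\setminus H_i$, which exists whenever $g\ge 2$ since $S\setminus H_i$ has genus $g-1\ge 1$. As $q_i\cap q_i'=\emptyset$, Corollary~\ref{cor disjoint handles} gives $q_i\sim_{FT}q_i'$, and $q_i'$ is in good position. Low-complexity edge cases such as $g=1$ with $c\subset H_1$ are handled by a short direct argument showing that $q_2$ is forced to be homotopic to $q_1$ (so that there is nothing to prove).

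Next I would choose standard double pants decompositions $(\widetilde P_i^a,\widetilde P_i^b)$ of $S\setminus c$ in which $q_i$ appears as a handle curve, and apply the inductive Main Theorem (componentwise when $c$ is separating, bridging via Corollary~\ref{cor disjoint handles} with a pair of disjoint handle curves, one in each component, should $q_1$ and $q_2$ lie in different components) to obtain a sequence of flips and handle twists on $S\setminus c$ transforming $(\widetilde P_1^a,\widetilde P_1^b)$ into $(\widetilde P_2^a,\widetilde P_2^b)$. Each $(\widetilde P_i^a,\widetilde P_i^b)$ extends to an admissible double pants decomposition $(P_i^a,P_i^b)$ of $S$ containing $c$ as a shared curve: in the separating case this extension is already standard, while in the non-separating case an arc dual to $c$ must be adjoined to provide the missing $g$-th handle, after which admissibility is verified by a bounded number of flips. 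Because every $FT$-move produced above acts only on curves disjoint from $c$, it lifts verbatim to an $FT$-move on $S$, giving $(P_1^a,P_1^b)\sim_{FT}(P_2^a,P_2^b)$. Since $q_i\in P_i^a\cap P_i^b$ is a handle curve of $S$, this proves $q_1\sim_{FT}q_2$.

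I expect the main obstacle to be the non-separating lifting step: a standard decomposition of $S\setminus c$ carries only $g-1$ handles while $S$ has genus $g$, so the naive extension is not standard, and one must verify admissibility of the lift in a way compatible with the entire $FT$-sequence produced by the induction.
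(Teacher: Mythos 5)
Your core reduction --- cut along $c$, apply the inductive Main Theorem to $S\setminus c$, and lift the resulting moves back to $S$ because they avoid $c$ (with the case of $q_1,q_2$ lying in different components handled through disjointness and Corollary~\ref{cor disjoint handles}) --- is exactly the paper's argument. But two of the steps you add contain genuine gaps. First, the ``good position'' fallback for $g=1$ is false: two handle curves whose handles both contain $c$ need not be homotopic. In $S_{1,2}$ let $c$ be non-separating, let $q_1$ bound a one-holed torus $H_1$ containing $c$, and let $c'$ be a non-separating curve disjoint from $c$ and not isotopic to $c$ (it exists because there are two holes). Such a $c'$ must cross $q_1$ (a curve disjoint from $q_1$ and $c$ would lie either in $H_1$, forcing it isotopic to $c$, or in the complementary pair of pants, forcing it peripheral), so $q_2:=T_{c'}(q_1)$ is a handle curve disjoint from $c$, with $c$ inside its handle, and $i(q_2,q_1)\ge i(c',q_1)^2>0$, hence $q_2\not\simeq q_1$. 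In fact no repositioning is needed at all: even when $c\subset H_i$, the curve $q_i$ cuts off a pair of pants in the component of $S\setminus c$ containing it, so it can be taken as a curve of the planar part of a standard double pants decomposition of that component (genus-zero components cause no trouble at all, Lemma~\ref{genus0}); this is all the induction requires, and it is all the paper's two-line proof uses.

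Second, the step you yourself flag as the main obstacle is not merely delicate, it is impossible as designed. If $c$ is non-separating and $S$ is closed, no double pants decomposition of $S$ containing $c$ in both $P^a$ and $P^b$ is admissible: compressing $c$ on both sides of $S\times[0,1]$ produces a non-separating $2$-sphere in the associated $3$-manifold, which therefore cannot be $S^3$, contradicting the characterisation of admissibility in~\cite[Theorem~2.15]{FN}. Nor can one ``adjoin an arc dual to $c$'': a curve meeting $c$ once cannot coexist with $c$ in a pants decomposition, and if it is added to only one family then $c$ is no longer shared and the moves coming from $S\setminus c$ no longer connect your two lifted decompositions. The escape is that the paper's definition of $FT$-equivalent handle curves asks only for \emph{some} double pants decompositions containing $q_i$ in both families, not admissible ones; the naive lifts obtained by adjoining $c$ to both families of $\widetilde P_i$ already witness the lemma, admissibility being needed only for the decompositions of the components of $S\setminus c$, where it can be arranged as above. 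With that self-imposed admissibility requirement dropped your argument collapses to the paper's; as written, the two points above are gaps.
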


\begin{proof}
Consider $S\setminus c$. If $q_1$ and $q_2$ belong to the same connected component of  $S\setminus c$
then we use an inductive assumption (as all connected components are either of smaller genus or, in assumption of the same genus, have smaller number of holes). If  $q_1$ and $q_2$ belong to different connected components, then $q_1\cap q_2=\emptyset$ and we can use Corollary~\ref{cor disjoint handles}.

\end{proof}

\begin{lemma}
\label{steps}
Let $c_1,c_2\subset S$ be two non-handle curves, $c_1\cap c_2=\emptyset$. Let $q_1,q_2\subset S$ be handle curves such that $c_1\cap q_1=c_2\cap q_2=\emptyset$. Then  $q_1$ is $FT$-equivalent to $q_2$.

\end{lemma}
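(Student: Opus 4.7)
The plan is to produce a handle curve $q_0\subset S$ disjoint from both $c_1$ and $c_2$. Granting this, Lemma~\ref{two handles to the same curve} applied to the pair $(q_1,q_0)$ with $c=c_1$ shows $q_1$ is $FT$-equivalent to $q_0$, and applied to $(q_0,q_2)$ with $c=c_2$ shows $q_0$ is $FT$-equivalent to $q_2$; composing yields the lemma.

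To locate $q_0$, I would examine the complement $\Sigma:=S\setminus(c_1\cup c_2)$. The main case is when some component $T$ of $\Sigma$ has positive genus: then the construction from the proof of Lemma~\ref{handle for a curve} is carried out inside $T$, namely I pick a non-separating curve $a\subset T$ and a curve $b\subset T$ meeting $a$ transversely in one point, and take $q_0$ to be the boundary of a regular neighbourhood of $a\cup b$ chosen small enough to lie in the interior of $T$. Then $q_0\cap(c_1\cup c_2)=\emptyset$, and the one-holed torus $N(a\cup b)$ stays a one-holed-torus component after $\Sigma$ is re-glued back to $S$ (since none of its boundaries are among the re-glued copies of $c_i$), so $q_0$ is a handle curve of $S$ as required.

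When every component of $\Sigma$ is planar, an Euler-characteristic count — using that $\Sigma$ has $n+4$ boundary components and that $\chi(\Sigma)=\chi(S)=2-2g-n$ — forces the number of components of $\Sigma$ to equal $3-g$, so $g\le 2$. In this low-genus regime I would try to construct $q_0$ from the re-gluing pattern: if some planar component $T\subset\Sigma$ contains both copies $c_i^{+},c_i^{-}$ of some $c_i$, then a simple closed curve $q_0\subset T$ separating $\{c_i^{+},c_i^{-}\}$ from the rest of $\partial T$ bounds a pair of pants whose two non-$q_0$ boundaries are identified in $S$, producing a one-holed-torus component of $S\setminus q_0$; again $q_0$ is a handle curve of $S$ disjoint from $c_1\cup c_2$.

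The main obstacle I anticipate is the exceptional low-genus sub-configuration in which the copies of $c_1$ and $c_2$ are split between the two planar components of $\Sigma$ so that no component contains both copies of the same $c_i$ (for instance $g=1$ with $c_1^{+},c_2^{+}$ in one component and $c_1^{-},c_2^{-}$ in the other). In this configuration no $q_0$ of the form above exists, and to finish I would need to argue directly in the very restricted remaining topology, for instance by producing by hand a handle curve disjoint from $q_1$ or $q_2$ and invoking Corollary~\ref{cor disjoint handles}, or by cutting along $c_1$ and using the inductive hypothesis on the resulting component containing $c_2$ and $q_1$.
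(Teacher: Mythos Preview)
Your overall strategy --- manufacture a handle curve $q_0$ disjoint from both $c_1$ and $c_2$ and then invoke Lemma~\ref{two handles to the same curve} twice --- is exactly the paper's strategy, and your Euler-characteristic reduction to $g\le 2$ together with the ``both copies of some $c_i$ in one planar component'' trick is a clean way to dispose of most of the low-genus cases.

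The genuine gap is the obstacle you yourself flag, and it is broader than a single sporadic configuration. Whenever $g=1$ and both $c_1,c_2$ are non-separating (and non-isotopic), each of the two planar components of $\Sigma$ carries exactly one copy of $c_1$ and one copy of $c_2$; your $q_0$-construction therefore never applies in this entire family $S_{1,n}$, $n\ge 2$. Worse, one checks that in this situation \emph{no} handle curve of $S$ is disjoint from $c_1\cup c_2$, so the ``find $q_0$'' plan cannot be rescued. Your two suggested fallbacks do not close the gap either: cutting along $c_1$ yields a genus-$0$ surface, where there are no handle curves and the inductive hypothesis is vacuous; and producing a handle curve disjoint from $q_1$ alone gives no link to $q_2$, since nothing controls $q_2\cap c_1$.

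The paper supplies the missing idea in two steps. First (Case~3.1), whenever $S\setminus\{c_1,c_2\}$ is strictly larger than $S_{0,3}\sqcup S_{0,3}$ it contains a curve $c$ that is \emph{separating in $S$}; one inserts $c$ between $c_1$ and $c_2$ and applies the already-established separating/non-separating case to each of the pairs $(c_1,c)$ and $(c,c_2)$. This reduces everything to the single surface $S=S_{1,2}$ with $S\setminus\{c_1,c_2\}=S_{0,3}\sqcup S_{0,3}$, where (Case~3.2) the paper simply exhibits by picture an explicit sequence of flips carrying a standard double pants decomposition containing $q_1$ to one containing $q_2$. That explicit flip computation --- not any further reduction --- is the ingredient your proposal is missing.
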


\begin{proof}
If $g>2$ (where $g$ is the genus of $S$), then at least one connected component of $S\setminus \{ c_1,c_2\}$ is a surface of positive genus, so, there exists a handle curve $q\in S \setminus \{ c_1,c_2\}$ which does not intersect $c_1\cup c_2$. By Lemma~\ref{two handles to the same curve}, $q_1$ is $FT$-equivalent to $q$, and $q$ is $FT$-equivalent to $q_2$, so the statement follows (see Fig.~\ref{nbhood}.b).

To prove the lemma for $g=1,2$,  we will consider three cases: 
either both $c_1$ and $c_2$ are separating, or just one of them, or neither.

\noindent
{\bf Case 1: both $c_1$ and $c_2$ are separating.} Then  $S\setminus \{c_1,c_2\}$ has a connected component of a positive genus, and, as above, there is a handle curve $q$ in that component, such that  $q\cap \{c_1\cup c_2\}=\emptyset$. Thus, the statement follows again from  Lemma~\ref{two handles to the same curve}  (see Fig.~\ref{nbhood}.b).

\noindent
{\bf Case 2: $c_1$ is separating, $c_2$ is not separating.} Consider $S'=S\setminus c_1$.
Notice that the connected component of $S'$ containing $c_2$ has a positive genus. 
So, by Lemma~\ref{handle for a curve} there exists a handle curve $q\subset S'$ disjoint from $c_2$.
Since $q$ is also disjoint from $c_1$, we may apply  Lemma~\ref{two handles to the same curve} as in Fig.~\ref{nbhood}.b again.

%Take a curve $a\in S'$ intersecting $c_2$ at a unique point and consider a small neighbourhood $N$ of $c_2\cup a$ (understanding now both $c_2$ and $a$ as strips).
% It is easy to see that $N$ has one boundary component, and that $N$  
%is actually a handle (see Fig.~\ref{nbhood}.b). 
%Let $q$ be the boundary of $N$, then $q$ is a handle curve disjoint from  $c_1\cup c_2$ and we may apply  Lemma~\ref{two handles to the same curve} as in Fig.~\ref{nbhood}.a again.

\begin{figure}[!h]
\begin{center}
\psfrag{a1}{\color{blue} $a$}
\psfrag{c2}{$c_2$}
\psfrag{c_2}{\color{red} $c$}
\psfrag{c1}{$c_1$}
\psfrag{c}{$c$}
\psfrag{N}{$N$}
\psfrag{q1}{$q_1$}
\psfrag{q}{$q$}
\psfrag{q12}{$q$}
\psfrag{q12_z}{\color{OliveGreen} $q$}
\psfrag{q2}{$q_2$}
\psfrag{1}{$(P^a_1, P^b_1)$}
\psfrag{2}{$(P^a_2, P^b_2)$}
\psfrag{a}{\small (a)}
\psfrag{b}{\small (b)}
\psfrag{c_}{\small (c)}
\psfrag{d}{\small (d)}
\epsfig{file=./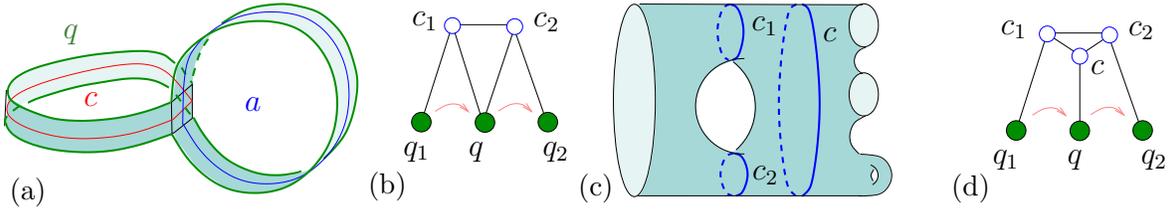,width=0.99\linewidth}
\caption{To the proof of Lemmas~\ref{handle for a curve} and~\ref{steps}.}
\label{nbhood}
\end{center}
\end{figure}

\noindent
{\bf Case 3: neither $c_1$ nor $c_2$ are separating.} 
We consider two possibilities: either  $S'= S\setminus \{c_1,c_2\}$ is not a disjoint union  $S_{0,3}\sqcup S_{0,3}$ of two pairs of pants
or it is.
%Consider two possibilities: either $S'= S\setminus \{c_1,c_2\}$ is connected or not.

%First, suppose that  $S'= S\setminus \{c_1,c_2\}$ is connected. Then there are curves $\bar q_1'$ and $\bar q_2'$ on $S'$ such that $\bar q_i'$  together with two boundary components  $c_{i,1}'$, $c_{i,2}'$ arising from  $c_i$  bound a pair of pants (to find $\bar q_i'$ we need to connect $c_{i,1}'$  to $c_{i,2}'$ by any simple path $\gamma'$ and to consider a boundary of a neighbourhood of  $c_{i,1}'\cup \gamma \cup c_{i,2}'$). Notice that the preimages of $\bar q_1'$ and $\bar q_2'$ on $S$ are some handle curves $\bar q_1$ and $\bar q_2$. Moreover, constructing   $\bar q_1'$ and $\bar q_2'$ successively, we may assume that $\bar q_1'$ does not intersect $\bar q_2'$ (as $S'\setminus \{c_{i,1}'\cup \gamma \cup c_{i,2}' \}$ is still connected), and hence $\bar q_1$ does not intersect $\bar q_2$. By Lemma~\ref{cor disjoint handles} $\bar q_1$ is $FT$-equivalent to $\bar q_2$, which, by Lemma~\ref{two handles to the same curve} implies that $q_1$ is $FT$-equivalent to $q_2$.

%Now, suppose that $S'= S\setminus \{c_1,c_2\}$ is not connected. If at least one component of $S'$  has a positive genus or contains more than three boundary components, then there is a simple curve $c'\in S'$, so that its preimage on $S$ is a curve $c$ which does not intersect $c_1 \cup c_2$. Hence, we can insert $c$ into the sequence between $c_1$ and $c_2$, 

\noindent
{\bf 3.1.}
Suppose that  $S'= S\setminus \{c_1,c_2\}$ is not a disjoint union of two pairs of pants (i.e. the surface is bigger than the one on Fig.~\ref{torus}.a). Then
$S'$ contains a separating  curve $c$. If $c$ is a handle curve, then we are in the settings of Fig.~\ref{nbhood}.b again (with $q=c$). If $c$ is not a handle curve, then we can insert $c$ into the sequence $\sigma$ between $c_1$ and $c_2$ (as in  Fig.~\ref{nbhood}d), use  Lemma~\ref{handle for a curve}  to construct a handle curve $q$ disjoint from $c$, and finally use Case 2 of the proof to show that $q_1$ is $FT$-equivalent to $q$ and $q$ is $FT$-equivalent to $q_2$.

\noindent
{\bf 3.2.}
Now, suppose that  $S'= S\setminus \{c_1,c_2\}$ is a union of two disjoint pairs of pants, as in Fig.~\ref{torus}.a. Let $q_1$ and $q_2$ be the handle curves shown in  Fig.~\ref{torus}.b and~\ref{torus}.c; notice  that $q_1$ and $q_2$ are disjoint from $c_1$ and $c_2$ respectively. We are left to prove that $q_1$ is $FT$-equivalent to $q_2$,
i.e. that there are  double pants decompositions $(P^a_1, P^b_1)$  and  $(P^a_2,P^b_2)$ such that  $c_1\in P^a_1\cap P^b_1$, $c_2\in  P^a_2\cap P^b_2$, and  $(P^a_1, P^b_1)$ is $FT$-equivalent to $(P^a_2,P^b_2)$.
An example of these double pants decompositions together with a sequence of $FT$-transformations is shown in   Fig.~\ref{torus}.d.

\end{proof}

\begin{figure}[!h]
\begin{center}
\psfrag{c1}{$c_1$}
\psfrag{c2}{$c_2$}
\psfrag{q1}{$q_1$}
\psfrag{q2}{$q_2$}
\psfrag{flip}{\color{blue} \it flip}
\psfrag{flip1}{\color{red} \it flip}
\psfrag{1}{$P^a_1=$}
\psfrag{2}{$P^b_1=$}
\psfrag{3}{$=P^a_2$}
\psfrag{4}{$=P^b_2$}
\psfrag{a}{\small (a)}
\psfrag{b}{\small (b)}
\psfrag{c}{\small (c)}
\psfrag{d}{\small (d)}
\epsfig{file=./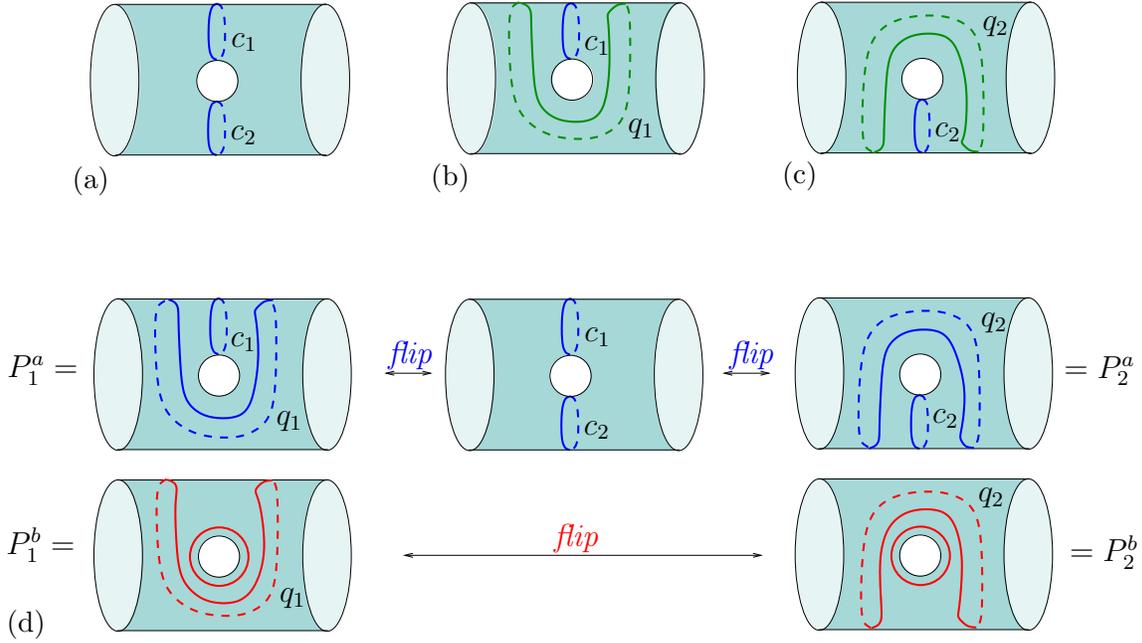,width=0.99\linewidth}
\caption{To the proof of Case~3.2. }
\label{torus}
\end{center}
\end{figure}

\end{document}